\newtheorem{theorem}{Theorem}[section]
\newtheorem{lemma}{Lemma}[section]
\newtheorem{definition}{Definition}[section]
\newtheorem{remark}{Remark}[section]
\newcommand{\bal}{\begin{align}}
\newcommand{\bbal}{\begin{align*}}
\newcommand{\beq}{\begin{equation}}
\newcommand{\eeq}{\end{equation}}
\newcommand{\bca}{\begin{cases}}
\newcommand{\eca}{\end{cases}}
\newcommand{\pa}{\partial}
\newcommand{\fr}{\frac}
\newcommand{\De}{\Delta}
\newcommand{\cd}{\cdot}
\newcommand{\dd}{\mathrm{d}}
\newcommand{\R}{\mathbb{R}}
\newcommand\f{\left}
\newcommand\g{\right}
\begin{document}
\bibliographystyle{plain}
\title{Ill-posedness for a generalized Camassa-Holm equation with higher-order nonlinearity in the critical Besov space}

\author{Wei Deng$^{1}$, Min Li$^{2}$\footnote{E-mail: limin@jxufe.edu.cn (Corresponding author)},  Xing Wu$^{3}$ and Weipeng Zhu$^{4}$\\
\small $^1$ Department of Mathematics, Ganzhou Teachers College, Ganzhou 341000, China\\
\small $^2$ Department of Mathematics, Jiangxi University of Finance and Economics, Nanchang 330032, China\\
\small $^3$ College of Information and Management Science,
Henan Agricultural University, Zhengzhou 450002, China\\
\small $^4$ School of Mathematics and Big Data, Foshan University, Foshan, Guangdong 528000, China}

\date{\today}

\maketitle\noindent{\hrulefill}

{\bf Abstract:} In this paper, we prove that the Cauchy problem for a generalized Camassa-Holm equation with higher-order nonlinearity is ill-posed in the critical Besov space $B^1_{\infty,1}(\R)$. It is shown in (J. Differ. Equ., 327:127-144,2022) that the Camassa-Holm equation is ill-posed in $B^1_{\infty,1}(\R)$, here we turn our attention to a higher-order nonlinear generalization of Camassa-Holm equation proposed by Hakkaev and Kirchev (Commun Partial Differ Equ 30:761-781,2005).  With newly constructed initial data, we get the norm inflation in the critical space $B^1_{\infty,1}(\R)$ which leads to ill-posedness.

{\bf Keywords:} Generalized Camassa-Holm equation; Ill-posedness; Critical Besov space.

{\bf MSC (2010):} 35Q53, 37K10.
\vskip0mm\noindent{\hrulefill}

\section{Introduction}

In this paper, we are concerned with the Cauchy problem for the higher-order nonlinear generalized Camassa-Holm equation
\begin{eqnarray}\label{eq1}
        \left\{\begin{array}{ll}
         u_t-u_{xxt}+\frac{(Q+2)(Q+1)}{2}u^Qu_x=\big(\frac{Q}{2}u^{Q-1}u_x^2+u^Qu_{xx}\big)_x, ~~t>0, ~x\in \mathbb{R},\\
          u(0, x)=u_0, ~~x\in \mathbb{R},\end{array}\right.
        \end{eqnarray}
where $Q\geqslant 1$ is a positive integer and $u(t, x)$ stands for the fluid velocity at time $t\geqslant 0$ in the spatial direction. The equation (\ref{eq1}) was introduced by Hakkaev and Kirchev\cite{hk1} in a more general case
$$
u_t-u_{xxt}+(a(u))_x=\big(\frac{b'(u)}{2}u_x^2+b(u)u_{xx}\big)_x,
$$
take $a(u)=\frac{Q+2}{2}u^{Q+1},~b(u)=u^Q$ we get equation (\ref{eq1}). When $Q=1$, (\ref{eq1})  is reduced to the well-known Camassa-Holm equation
\begin{align}\label{che}\tag{CH}
u_t-u_{xxt}+3uu_x=2u_xu_{xx}+uu_{xxx}.
\end{align}
The Camassa-Holm equation (\ref{che})  was first founded by Fuchssteiner and Fokas\cite{ff} as a completely integrable generalization of the Korteweg-de-Vries (KdV) equation with bi-Hamiltonian structure, and was later recovered as a water wave model by Camassa and Holm\cite{ch} to describe the unidirectional propagation of shallow water waves over a flat bottom. Most importantly, CH equation has peakon solutions of the form $Ce^{-|x-Ct|}$ which aroused a lot of interest in physics, see \cite{c5,t}. There is an extensive literature about the strong well-posedness, weak solutions and analytic or geometric properties of the CH equation, here we name some. Local well-posedness and ill-posedness for the Cauchy problem of the CH equation were investigated in \cite{ce2,d2,glmy}. Blow-up phenomena and global existence of strong solutions were discussed in \cite{c2,ce2,ce3,ce4}. The existence of global weak solutions and dissipative solutions were investigated in \cite{bc1,bc2,xz1}, more results can be found in the references therein.

As mentioned by Hakkaev and Kirchev in \cite{hk2}, (\ref{eq1}) is a generalization of Camassa-Holm equation in the way that it preserves two important conservation laws of (\ref{che}), namely
\begin{align}\label{efp}
E(u)=\int_{\R}(u^2+u_x^2)dx,~~~~ F(u)=\int_{\R}(u^{Q+2}+u^Qu_x^2)dx.
  \end{align}
These are the key conserved quantities to explore the orbital stability and instability of solitary wave solutions of (\ref{che}). We are mainly concerned with the Cauchy problem of (\ref{eq1}), here is a brief review of the previous literature. Hakkafv and Kirchev \cite{hk1} proved that (\ref{eq1}) is locally well-posed with the initial data in $H^s(\R), s>\frac32$, and also obtained the stability of peakons and orbital stability of solitary wave solution of (\ref{eq1}). Yan et al. \cite{ylz} proved that the solutions to the Cauchy problem (\ref{eq1}) do not depend uniformly continuously on the initial data in $H^s(\R), s<\frac32$ and showed the local well-posedness in $B^{3/2}_{2,1}(\R)$.

Recently, a lot of literature was devoted to studying the well-posedness (especially the non-uniform dependence and ill-posedness) problem of the Camassa-Holm type equations in the critical Besov spaces\cite{hk,hkm,wyx}. For example, Guo et al. \cite{glmy} proved norm inflation and hence ill-posedness for the Camassa-Holm equation in the critical Sobolev space $H^{3/2}(\R)$ which solves the open problem left by Danchin \cite{d2}. In fact, in \cite{glmy} they proved the ill-posedness for the Camassa-Holm type equations in Besov space $B^{1+1/p}_{p,r}(\mathbb{R})$ with $(p,r)\in [1,\infty]\times(1,\infty]$ which means $B^{1+1/p}_{p,1}$ is the critical Besov space. Then, Li et al. \cite{lyz1,lyz2} demonstrated the non-continuity and sharp ill-posedness of the Camassa-Holm type equations in $B^s_{p,\infty}(\mathbb{R})$ with $s>\max\{\frac32,1+\frac1p\}$. Later, the local well-posedness for the Camassa-Holm type equations in $B^{1+1/p}_{p,1}(\mathbb{R})$ with $p\in[1,\infty)$ has been proved by Ye et al.\cite{yyg} through the compactness argument and Lagrangian coordinate transformation. In the remaining case $p=\infty$, Guo et al.\cite{gyy} proved the ill-posedness for the CH equation in $B^1_{\infty,1}(\R)$ by exhibiting the norm inflation, then Li et al. \cite{lyz3} has got the ill-posedness for the Novikov equation in the same space. Quite unexpectedly, Li et al. \cite{lyz4} have fund that the Degasperis-Procesi equatoin is well-posed in $B^1_{\infty,1}(\R)$. In this paper,  we will deduce the ill-posedness of the Cauchy problem (\ref{eq1}) in $B^1_{\infty,1}(\R)$. Due to the strong nonlinearity of the equation (\ref{eq1}), our construction of the initial data is quite different from the previous proof.

We first rewrite (\ref{eq1}) in the following equivalent nonlocal form
\begin{equation}\label{eq3}
\begin{cases}
u_t+u^Q\pa_xu=-\pa_x(1-\pa^2_x)^{-1}[\frac{Q^2+3Q}{2(Q+1)}u^{Q+1}+\frac Q2u^{Q-1}(\pa_xu)^2], \; &(t,x)\in \R^+\times\R,\\
u(0,x)=u_0,\; &x\in \R.
\end{cases}
\end{equation}
Setting $\Lambda^{-2}=(1-\pa^2_x)^{-1}$, then $\Lambda^{-2}f=G*f$ where $G(x)=\fr12e^{-|x|}$ is the kernel of the operator $\Lambda^{-2}$. We can transform the generalized Camassa-Holm equation into the following transport type equation
\begin{equation}\label{N}
\begin{cases}
u_t+u^Qu_x=\mathbf{P}_1(u)+\mathbf{P}_2(u),\\
u(x,t=0)=u_0(x),
\end{cases}
\end{equation}
where
\begin{equation}\label{6}
\mathbf{P}_1(u)=-\Lambda^{-2}(c_1u^{Q-2}u_x^3)
-\pa_x\Lambda^{-2}\left(c_2u^{Q+1}\right)\quad\text{and}\quad \mathbf{P}_2(u)=-\pa_x\Lambda^{-2}\left(c_3u^{Q-1}u^2_x\right),
\end{equation}
with $c_1=0,c_2=\frac{Q^2+3Q}{2(Q+1)}$ and $c_3=\frac Q2$. Our main result is as follows.
\begin{theorem}\label{th1}
Let $Q\geq 2$. The Cauchy problem for the generalized Camassa-Holm equation (\ref{eq1}) is ill-posed in $B^1_{\infty,1}(\R)$ in the sense of Hadamard. More precisely, for large enough $n\in \mathbb{Z}^+$, there exists an initial data $u_{0,n}$ such that the generalized Camassa-Holm equation (\ref{eq1}) has a solution $u_n\in \mathcal{C}([0,1];H^{3})$ satisfying
\bbal
\|u_{0,n}\|_{B^1_{\infty,1}}\leq \frac{1}{\log\log n}\quad\text{but}\quad
\|u_n(t_n)\|_{B^1_{\infty,1}}\geq {\log\log n},
\end{align*}
with $t_n\in \left(0,\frac{1}{\log n}\right]$.
\end{theorem}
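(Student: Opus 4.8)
The plan is to establish a \emph{norm inflation} phenomenon, from which the asserted Hadamard ill-posedness follows at once: the data $u_{0,n}\to 0$ in $B^1_{\infty,1}(\mathbb R)$ produce solutions with $\|u_n(t_n)\|_{B^1_{\infty,1}}\to\infty$ at times $t_n\to 0$, so the solution map is not continuous at $0$. I would work throughout with the nonlocal form \eqref{N}--\eqref{6}. The proof splits into three parts: building $u_{0,n}$; solving \eqref{eq1} on $[0,1]$; and extracting a large lower bound for $\|u_n(t_n)\|_{B^1_{\infty,1}}$.

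\emph{Construction of the data.} In the spirit of \cite{glmy,gyy}, but adapted to the higher-order term, I would take $u_{0,n}(x)=\sum_{k}a_{n,k}\,\phi(x)\cos(\xi_{n,k}x)$, a finite superposition of modulated copies of a fixed Schwartz bump $\phi$, with the frequencies $\xi_{n,k}$ placed at a growing number $K_n$ of well-separated dyadic scales — lowest scale bounded, highest of the order of a fixed power of $\log n$ — and amplitudes chosen so that $\sum_k\xi_{n,k}|a_{n,k}|\le 1/\log\log n$, i.e. $\|u_{0,n}\|_{B^1_{\infty,1}}\le 1/\log\log n$. Because $\|\cdot\|_{B^1_{\infty,1}}$ is a \emph{sum} over dyadic blocks while $\|\cdot\|_{W^{1,\infty}}\sim\|\cdot\|_{B^1_{\infty,\infty}}$ is a \emph{supremum}, each block of $u_{0,n}$ must then be of size $\lesssim(\log\log n)^{-1}K_n^{-1}$ in $W^{1,\infty}$, so $\|u_{0,n}\|_{W^{1,\infty}}$ is very small. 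Hence the inflation cannot come from any single block: it has to come from the nonlinear dynamics spreading coherent Fourier mass across $\gtrsim(\log\log n)^2$ dyadic blocks while keeping each one small. Arranging the scales, amplitudes and phases so that this spreading — driven by the interactions sitting exactly at the threshold of $B^1_{\infty,1}$ (the transport nonlinearity $u^Q\partial_x u$ and the term $\partial_x\Lambda^{-2}(u^{Q-1}u_x^2)$) — occurs constructively rather than through cancellation is the delicate heart of the construction, and is where the higher nonlinearity $Q\ge 2$ forces something genuinely different from the $Q=1$ case.

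\emph{Existence on $[0,1]$.} Since $u_{0,n}$ is Schwartz, the local theory in $H^s$, $s>\tfrac32$ \cite{hk1,ylz}, yields a maximal solution $u_n\in\mathcal C([0,T_n);H^3)$ with blow-up criterion $T_n<\infty\Rightarrow\int_0^{T_n}\|\partial_x u_n\|_{L^\infty}\,dt=\infty$. Using $\partial_x^2\Lambda^{-2}=-\mathrm{Id}+\Lambda^{-2}$ to bound $\partial_x\mathbf{P}_1,\partial_x\mathbf{P}_2$ in $L^\infty$, one gets along the characteristics of $u_n^Q$ the Riccati-type inequalities $\tfrac{d}{dt}\|u_n\|_{L^\infty}\lesssim\|u_n\|_{L^\infty}^{Q+1}+\|u_n\|_{L^\infty}^{Q-1}\|\partial_x u_n\|_{L^\infty}^2$ and the same bound for $\tfrac{d}{dt}\|\partial_x u_n\|_{L^\infty}$. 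A bootstrap gives $\|u_n(t)\|_{L^\infty}+\|u_n(t)\|_{W^{1,\infty}}\lesssim 1/\log\log n$ on $[0,1]$, since the Riccati coefficient $\|u_n\|_{L^\infty}^{Q-1}$ stays of size $\lesssim(\log\log n)^{-(Q-1)}$, pushing the relevant blow-up time past $(\log\log n)^{Q}\gg 1$; feeding this into the $H^3$ energy estimate keeps $\|u_n(t)\|_{H^3}$ finite on $[0,1]$, so $u_n\in\mathcal C([0,1];H^3)$. This step is also the consistency check for the theorem: $\|u_n\|_{W^{1,\infty}}$ stays small, so a first-order expansion $u_n(t)\approx u_{0,n}+t\,\mathcal N(u_{0,n})$ is useless in $B^1_{\infty,1}$ — $\mathcal N(u_{0,n})$ is itself large in $W^{1,\infty}$ and is almost cancelled by the higher-order terms — and the lower bound below must be obtained without it.

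\emph{The lower bound, and the main difficulty.} I would write $u_n$ via Duhamel along the flow $X_t$ of $u_n^Q$, $u_n(t,X_t(x))=u_{0,n}(x)+\int_0^t(\mathbf{P}_1+\mathbf{P}_2)(s,X_s(x))\,ds$, check that the transported datum $u_{0,n}\circ X_t^{-1}$ stays bounded in $B^1_{\infty,1}$ (the flow is $C^1$-close to the identity since $\|\partial_x(u_n^Q)\|_{L^\infty}$ is tiny), and analyse the remaining part of $u_n(t_n)$ block by block, exhibiting $\gtrsim(\log\log n)^2$ dyadic blocks on each of which $2^{j}\|\Delta_j u_n(t_n)\|_{L^\infty}\gtrsim(\log\log n)^{-1}$, so that the sum is $\ge\log\log n$. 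This last estimate is the main obstacle. There is no workable nonlinear iteration in $B^1_{\infty,1}$ to lean on — that failure is precisely why the space is critical — and, as just noted, the effect is invisible at first order; it is a cumulative effect over the short time $t_n\le 1/\log n$, right at the point where the logarithmic loss in the $B^1_{\infty,1}$ transport estimate becomes unavoidable. Capturing it requires a careful, frequency-localized accounting of the interactions among the many scales present in $u_{0,n}$, and the argument only closes on a narrow window of parameters — the time $t_n$, the number of scales $K_n$, the top frequency and the amplitude profile — which must all be tuned together; this balancing is the technically hardest part.
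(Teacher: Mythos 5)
Your proposal correctly identifies the overall scheme (norm inflation from carefully prepared data, existence on $[0,1]$ from the $H^s$ theory, and a lower bound read off along the flow of $u_n^Q$), but the decisive step --- the lower bound on $\|u_n(t_n)\|_{B^1_{\infty,1}}$ --- is not actually proved: you describe it as ``the main obstacle'' requiring ``a careful, frequency-localized accounting'' and a parameter balancing that is ``the technically hardest part'', without carrying any of it out. Worse, the heuristic you offer for why this step is hard points in the wrong direction. You assert that the first-order expansion $u_n(t)\approx u_{0,n}+t\,\mathcal N(u_{0,n})$ is useless because $\mathcal N(u_{0,n})$ is ``large in $W^{1,\infty}$ and almost cancelled by the higher-order terms''. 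In the paper the mechanism is exactly the opposite: writing the equation along the flow $\phi_n$ of $u_n^{Q}$ gives $\Delta_j u_n\circ\phi_n=\Delta_j u_{0,n}+c_3t\,\Delta_jE_{0,n}+(\text{errors})$ with $E_{0,n}=-\partial_x\Lambda^{-2}\bigl(u_{0,n}^{Q-1}(\partial_xu_{0,n})^2\bigr)$, and this first Duhamel iterate is \emph{small} in $W^{1,\infty}$ (of order $n^{-1}(\log n)^2$, since $\partial_x\Lambda^{-2}$ is bounded on $L^\infty$) yet \emph{large} in $B^1_{\infty,1}$: Lemma \ref{le-e2} gives $\|u_{0,n}^{Q-1}(\partial_xu_{0,n})^2\|_{B^0_{\infty,1}(\mathbb{N}(n))}\ge c(\log n)^2$, so at $t_n=1/\log n$ the linear-in-$t$ term alone contributes $c\log n\ge\log\log n$, while all error terms (the commutator $R^1_{j,n}$, the term $F_n$, and $E_n\circ\phi_n-E_{0,n}$) are $O(n^{-\theta}(\log n)^{2Q+1})$ for some $\theta>0$. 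There is no delicate cancellation to beat and no cumulative multi-scale dynamics to track; the entire inflation is visible at first order in time.

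The reason this works is a feature of the data that your construction does not reproduce. The paper takes $u_{0,n}=n^{-1/(Q+1)}(u^{\rm H}_{0,n}+u^{\rm L}_{0,n})$ with $u^{\rm H}_{0,n}$ carried by a \emph{single} very high frequency $\approx 2^n$ but modulated by $\sim n$ secondary carriers $\cos(2^{\ell}\gamma\,\cdot)$, $\ell\in\mathbb{N}(n)\subset[n/4,n/2]$, each attached to a translated bump. Squaring $\partial_xu^{\rm H}_{0,n}$ inside $u^{Q-1}u_x^2$ produces, via $\sin^2(2^n\gamma\,\cdot)\cos^2(2^{\ell}\gamma\,\cdot)=\tfrac14(1-\cos(2^{n+1}\gamma\,\cdot))(1+\cos(2^{\ell+1}\gamma\,\cdot))$, a resonant contribution at frequency $2^{\ell+1}\gamma$ for every $\ell$, i.e.\ coherent mass in $\sim n$ distinct dyadic blocks, each of size $\approx(\log n)^2/n$ in $L^\infty$; summing over blocks yields $(\log n)^2$, while the $B^1_{\infty,1}$ norm of $u_{0,n}$ itself is only $\approx n^{-1/(Q+1)}\log n$. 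Your data, with top frequency only a power of $\log n$ and amplitudes tuned so that $\|u_{0,n}\|_{B^1_{\infty,1}}\le 1/\log\log n$, has no analogous resonance producing a quantity that is simultaneously $o(1)$ in $W^{1,\infty}$ and large compared with $t_n^{-1}$ in $B^0_{\infty,1}$, and no argument is given that it could. As written, the proposal is a plausible research plan whose central estimate is missing, and whose stated explanation of where the difficulty lies contradicts the proof that actually works.
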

\begin{remark}
In fact Theorem \ref{th1} shows the norm inflation in the space $B^1_{\infty,1}(\R)$, which implies the data-to-solution map of the equation (\ref{eq1}) is discontinuous at the origin $u_0=0$. In the sense of Hadamard, that means the Cauchy problem for the generalized Camassa-Holm equation (\ref{eq1}) is ill-posed in $B^1_{\infty,1}(\R)$.
\end{remark}

\section{Preliminaries}\label{sec2}
In this section, we introduce some basic definitions and lemmas related to Fourier transform, such as Littlewood-Paley decomposition, Besov spaces and Moser type estimation. Some common notations of this paper are given at the end.

Let us recall that for all $f\in \mathcal{S}$, the Fourier transform $\widehat{f}$, is defined by
$$
(\mathcal{F} f)(\xi)=\widehat{f}(\xi)=\int_{\R}e^{-ix\xi}f(x)\dd x \quad\text{for any}\; \xi\in\R.
$$
 The inverse Fourier transform of any $g$ is given by
$$
(\mathcal{F}^{-1} g)(x)=\check{g}(x)=\frac{1}{2 \pi} \int_{\R} g(\xi) e^{i x \cdot \xi} \dd \xi.
$$

Next, we will recall some facts about the Littlewood-Paley decomposition and the nonhomogeneous Besov spaces (see \cite{B} for more details).
Let $\mathcal{B}:=\{\xi\in\mathbb{R}:|\xi|\leq 4/3\}$ and $\mathcal{C}:=\{\xi\in\mathbb{R}:3/4\leq|\xi|\leq 8/3\}.$
Choose a radial, non-negative, smooth function $\chi:\R\mapsto [0,1]$ such that it is supported in $\mathcal{B}$ and $\chi\equiv1$ for $|\xi|\leq3/4$. Setting $\varphi(\xi):=\chi(\xi/2)-\chi(\xi)$, then we deduce that $\varphi$ is supported in $\mathcal{C}$. Moreover,
\begin{eqnarray*}
\chi(\xi)+\sum_{j\geq0}\varphi(2^{-j}\xi)=1 \quad \mbox{ for any } \xi\in \R.
\end{eqnarray*}
We should emphasize that the fact $\varphi(\xi)\equiv 1$ for $4/3\leq |\xi|\leq 3/2$ which will be used in the sequel.

For every $u\in \mathcal{S'}(\mathbb{R})$, the inhomogeneous dyadic blocks ${\Delta}_j$ are defined as follows
\begin{numcases}{\Delta_ju=}
0, & if $j\leq-2$;\nonumber\\
\chi(D)u=\mathcal{F}^{-1}(\chi \mathcal{F}u), & if $j=-1$;\nonumber\\
\varphi(2^{-j}D)u=\mathcal{F}^{-1}\f(\varphi(2^{-j}\cdot)\mathcal{F}u\g), & if $j\geq0$.\nonumber
\end{numcases}
In the inhomogeneous case, the following Littlewood-Paley decomposition makes sense
$$
u=\sum_{j\geq-1}{\Delta}_ju\quad \text{for any}\;u\in \mathcal{S'}(\mathbb{R}).
$$
\begin{definition}
  Let $s\in\mathbb{R}$ and $(p,r)\in[1, \infty]^2$. The nonhomogeneous Besov space $B^{s}_{p,r}(\R)$ is defined by
  \begin{align*}
  B^{s}_{p,r}(\R):=\Big\{f\in \mathcal{S}'(\R):\;\|f\|_{B^{s}_{p,r}(\mathbb{R})}<\infty\Big\},
  \end{align*}
  where
  \begin{numcases}{\|f\|_{B^{s}_{p,r}(\mathbb{R})}=}
  \left(\sum_{j\geq-1}2^{sjr}\|\Delta_jf\|^r_{L^p(\mathbb{R})}\right)^{\fr1r}, &if $1\leq r<\infty$,\nonumber\\
  \sup_{j\geq-1}2^{sj}\|\Delta_jf\|_{L^p(\mathbb{R})}, &if $r=\infty$.\nonumber
  \end{numcases}
  \end{definition}

We need the following inequality, which is a generalization of a result due to Bernstein.
\begin{lemma}[Lemma 2.1 in \cite{B}] \label{lem2.1} Let $\mathcal{B}$ be a Ball and $\mathcal{C}$ be an annulus. There exist constants $C>0$ such that for all $k\in \mathbb{N}\cup \{0\}$, any $\lambda\in \R^+$ and any function $f\in L^p$ with $1\leq p \leq q \leq \infty$, we have
\begin{align*}
&{\rm{supp}}\widehat{f}\subset \lambda \mathcal{B}\;\Rightarrow\; \|\pa_x^kf\|_{L^q}\leq C^{k+1}\lambda^{k+(\frac{1}{p}-\frac{1}{q})}\|f\|_{L^p},  \\
&{\rm{supp}}\widehat{f}\subset \lambda \mathcal{C}\;\Rightarrow\; C^{-k-1}\lambda^k\|f\|_{L^p} \leq \|\pa_x^kf\|_{L^p} \leq C^{k+1}\lambda^k\|f\|_{L^p}.
\end{align*}
\end{lemma}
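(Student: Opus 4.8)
The statement is the classical Bernstein inequality, and the plan is to convert the action of $\pa_x^k$ on a frequency-localized function into a convolution with a rescaled fixed kernel, and then read off the $L^p\to L^q$ bounds from Young's convolution inequality. The only quantitative subtlety is to keep the implied constants growing geometrically, i.e. of the form $C^{k+1}$, rather than factorially.

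For the first (ball) estimate I would fix, once and for all, a smooth function $\theta$ that equals $1$ on $\mathcal{B}$ and is supported in a slightly larger ball $\mathcal{B}'$ of radius $R$; this $\theta$ depends only on $\mathcal{B}$. Since $\mathrm{supp}\,\widehat f\subset\la\mathcal{B}$ and $\theta(\cdot/\la)\equiv 1$ there, we have $\widehat f=\theta(\cdot/\la)\widehat f$, so that $\widehat{\pa_x^k f}(\xi)=\la^k\psi_k(\xi/\la)\widehat f(\xi)$ with $\psi_k(\eta):=(i\eta)^k\theta(\eta)$. Writing $g_k:=\mathcal{F}^{-1}\psi_k$ and $g_{k,\la}(x):=\la g_k(\la x)$, this identity becomes $\pa_x^k f=\la^k\, g_{k,\la}*f$. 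Young's inequality with $1+\fr1q=\fr1r+\fr1p$ gives $\|\pa_x^k f\|_{L^q}\le\la^k\|g_{k,\la}\|_{L^r}\|f\|_{L^p}$, and the scaling identity $\|g_{k,\la}\|_{L^r}=\la^{1-1/r}\|g_k\|_{L^r}$ together with $1-\fr1r=\fr1p-\fr1q$ reproduces exactly the exponent $\la^{k+(1/p-1/q)}$.

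The crux is then the uniform kernel bound $\|g_k\|_{L^r}\le C^{k+1}$ for all $r\in[1,\infty]$, which by interpolation reduces to the endpoints $r=\infty$ and $r=1$. The sup-norm is immediate, $\|g_k\|_{L^\infty}\le\fr1{2\pi}\|\psi_k\|_{L^1}\le\fr{R^k}{2\pi}\|\theta\|_{L^1}$. For the $L^1$-norm I would use the decay trick $\|g_k\|_{L^1}\le\pi\|(1+x^2)g_k\|_{L^\infty}$ with $(1+x^2)g_k=\mathcal{F}^{-1}[(1-\pa_\eta^2)\psi_k]$, so that $\|g_k\|_{L^1}\les\|\psi_k\|_{L^1}+\|\pa_\eta^2\psi_k\|_{L^1}$. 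Differentiating $\psi_k$ twice produces the monomials $\eta^{k-2},\eta^{k-1},\eta^k$ with coefficients $k(k-1),2k,1$, each supported in $|\eta|\le R$; thus every term is a polynomial in $k$ times $R^k$, and absorbing that polynomial into a marginally larger base yields the bound $\le C^{k+1}$. This passage from polynomial-times-$R^k$ to $C^{k+1}$ is exactly the point at which geometric rather than factorial growth is secured, and it is the only genuinely delicate step; I expect it to be the main obstacle, together with checking that a single $C$ serves uniformly in $\la$, in $(p,q)$, and in the Young exponent $r$.

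For the second (annulus) estimate the upper bound is the same argument with $q=p$ and $\theta$ replaced by a cutoff $\tilde\varphi$ that is $1$ on $\mathcal{C}$ and supported near it, giving $\|\pa_x^k f\|_{L^p}\le C^{k+1}\la^k\|f\|_{L^p}$. For the lower bound I would invert the derivative: since $\mathcal{C}$ is bounded away from the origin, $|\eta|\ge c_0>0$ on $\mathrm{supp}\,\tilde\varphi$, so $\Phi_k(\eta):=\tilde\varphi(\eta)/(i\eta)^k$ is smooth and compactly supported. From $\widehat f=\tilde\varphi(\cdot/\la)\widehat f$ one obtains $\widehat f(\xi)=\la^{-k}\Phi_k(\xi/\la)\widehat{\pa_x^k f}(\xi)$, i.e. $f=\la^{-k}\,G_{k,\la}*\pa_x^k f$ with $G_k:=\mathcal{F}^{-1}\Phi_k$, and Young's inequality with $r=1$ gives $\|f\|_{L^p}\le\la^{-k}\|G_k\|_{L^1}\|\pa_x^k f\|_{L^p}$. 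The same decay trick bounds $\|G_k\|_{L^1}\les\|\Phi_k\|_{L^1}+\|\pa_\eta^2\Phi_k\|_{L^1}$; differentiating $\tilde\varphi(\eta)\eta^{-k}$ twice yields powers $\eta^{-k},\eta^{-k-1},\eta^{-k-2}$ with polynomial-in-$k$ coefficients, each controlled on the support by $c_0^{-k-2}$ times a polynomial, hence again $\le C^{k+1}$. Rearranging $\|f\|_{L^p}\le\la^{-k}C^{k+1}\|\pa_x^k f\|_{L^p}$ then gives the claimed lower bound $C^{-k-1}\la^k\|f\|_{L^p}\le\|\pa_x^k f\|_{L^p}$, completing the proof.
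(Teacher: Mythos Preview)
The paper does not supply its own proof of this lemma: it is quoted verbatim as Lemma~2.1 of \cite{B} and used as a black box. Your argument is correct and is in fact the standard proof given in that reference---cutoff to a fixed multiplier, rescale, convert $\pa_x^k$ into convolution with $\la^k$ times a dilated kernel, apply Young's inequality, and control $\|\mathcal{F}^{-1}\psi_k\|_{L^1}$ (respectively $\|\mathcal{F}^{-1}\Phi_k\|_{L^1}$) via the $(1+x^2)$ weight to secure geometric rather than factorial growth in $k$---so there is nothing to compare beyond noting that your write-up matches the cited source.
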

\begin{lemma}[Lemma 2.100 in \cite{B}] \label{lem2.2}
Let $1 \leq r \leq \infty$, $1 \leq p \leq p_{1} \leq \infty$ and $\frac{1}{p_{2}}=\frac{1}{p}-\frac{1}{p_{1}}$. There exists a constant $C$ depending continuously on $p,p_1$, such that
$$
\left\|\left(2^{j}\left\|[\Delta_{j},v \pa_x] f\right\|_{L^{p}}\right)_{j}\right\|_{\ell^{r}} \leq C\left(\|\pa_x v\|_{L^{\infty}}\|f\|_{B_{p, r}^{1}}+\|\pa_x f\|_{L^{p_{2}}}\|\pa_x v\|_{B_{p_1,r}^{0}}\right).
$$
\end{lemma}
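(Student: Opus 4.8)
The statement is the classical commutator estimate of Bony's paradifferential calculus, so the plan is to reduce it to the interaction of Littlewood--Paley blocks via Bony's decomposition and to isolate the single piece where the commutator structure is genuinely needed. Writing $S_{k-1}=\sum_{\ell\le k-2}\Delta_\ell$ and recalling the paraproduct $T_ab=\sum_k S_{k-1}a\,\Delta_k b$ together with the remainder $R(a,b)=\sum_{|k-k'|\le1}\Delta_k a\,\Delta_{k'}b$, I would first expand $v\pa_x f=T_v\pa_x f+T_{\pa_x f}v+R(v,\pa_x f)$. Since $\Delta_j$ commutes with $\pa_x$, the $T_v$ part of the commutator collapses to the genuine commutator $[\Delta_j,T_v]\pa_x f$, so that
\begin{align*}
[\Delta_j,v\pa_x]f=[\Delta_j,T_v]\pa_x f+\Delta_j\big(T_{\pa_x f}v\big)+\Delta_j R(v,\pa_x f)-T_{\pa_x\Delta_j f}v-R(v,\pa_x\Delta_j f).
\end{align*}
The first term carries all the commutator cancellation; the remaining four are off-diagonal and can be bounded directly.

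For the main term I would use spectral localization to write $[\Delta_j,T_v]\pa_x f=\sum_{|k-j|\le N_0}[\Delta_j,S_{k-1}v]\Delta_k\pa_x f$ with $N_0$ a fixed integer. Representing $\Delta_j$ as convolution with $2^jh(2^j\cdot)$, $h=\mathcal F^{-1}\varphi$, each commutator becomes
\begin{align*}
[\Delta_j,S_{k-1}v]\Delta_k\pa_x f(x)=\int 2^jh\big(2^j(x-y)\big)\big(S_{k-1}v(y)-S_{k-1}v(x)\big)\Delta_k\pa_x f(y)\,\dd y,
\end{align*}
and the mean value theorem together with $\int 2^j|z|\,|h(2^jz)|\,\dd z\les 2^{-j}$ and $\|\pa_x S_{k-1}v\|_{L^\infty}\les\|\pa_x v\|_{L^\infty}$ yields $\|[\Delta_j,S_{k-1}v]\Delta_k\pa_x f\|_{L^p}\les 2^{-j}\|\pa_x v\|_{L^\infty}\|\Delta_k\pa_x f\|_{L^p}$. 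Multiplying by $2^j$, summing the finitely many $k$, and using $\|\Delta_k\pa_x f\|_{L^p}\sim 2^k\|\Delta_k f\|_{L^p}$, the $\ell^r$ norm in $j$ is controlled by $\|\pa_x v\|_{L^\infty}\|f\|_{B^1_{p,r}}$, which is the first term on the right-hand side.

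For the four remaining pieces the commutator structure is not needed. In $\Delta_j(T_{\pa_x f}v)$ and $R(v,\pa_x\Delta_j f)$ only the frequencies $|k-j|\le N_0$ survive, while in $\Delta_j R(v,\pa_x f)$ and $T_{\pa_x\Delta_j f}v$ only $k\ge j-N_0$ survive; in every case H\"older's inequality with $\tfrac1p=\tfrac1{p_1}+\tfrac1{p_2}$ gives a bound by $\|\pa_x f\|_{L^{p_2}}\,\|\Delta_k v\|_{L^{p_1}}$ up to harmless fixed factors. Using $2^j\|\Delta_k v\|_{L^{p_1}}\les 2^{j-k}\|\Delta_k\pa_x v\|_{L^{p_1}}$ with $2^{j-k}$ summable over $k\ge j-N_0$, the $\ell^r$ summation in $j$ is a discrete convolution, so Young's inequality for series bounds each of these terms by $\|\pa_x f\|_{L^{p_2}}\|\pa_x v\|_{B^0_{p_1,r}}$.

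Collecting the five contributions gives the claimed inequality, with $C$ depending only on $p,p_1$ through the fixed convolution constants. The only genuinely delicate point is the main term: one must exploit the difference $S_{k-1}v(y)-S_{k-1}v(x)$ to extract the gain $2^{-j}$ that absorbs the derivative falling on $f$ and converts $\|\pa_x v\|_{L^\infty}$ into a bound against $\|f\|_{B^1_{p,r}}$ rather than a full derivative on $v$. By comparison, the bookkeeping of frequency supports and the convolution estimate for the off-diagonal paraproduct and remainder terms is routine.
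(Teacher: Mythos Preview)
The paper does not supply its own proof of this lemma: it is quoted verbatim as Lemma~2.100 of \cite{B} and used as a black box. Your argument is precisely the standard proof given in that reference---Bony decomposition, the first-order Taylor/mean-value estimate on the convolution kernel for the diagonal piece $[\Delta_j,S_{k-1}v]\Delta_k\pa_xf$, and routine frequency-localization plus H\"older/Young for the four off-diagonal terms---so there is nothing to compare beyond noting that you have correctly reproduced the textbook proof the paper cites.
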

{\bf Notations :}
 $C$ stands for some positive constant independent of $n$, which may vary from line to line.
 The symbol $A\approx B$ means that $C^{-1}B\leq A\leq CB$.
 We shall call a ball $B(x_0,r)=\{x\in \R: |x-x_0|\leq R\}$ with $R>0$ and an annulus $\mathcal{C}(0,r_1,r_2)=\{x\in \R: 0<r_1\leq|x|\leq r_2\}$ with $0 <r_1 <r_2$.
  Given a Banach space $X$, we denote its norm by $\|\cdot\|_{X}$. We shall use the simplified notation $\|f,\cdots,g\|_X=\|f\|_X+\cdots+\|g\|_X$ if there is no ambiguity.
 We will also define the Lipschitz space $C^{0,1}$ using the norm $\|f\|_{C^{0,1}}=\|f\|_{L^\infty}+\|\pa_xf\|_{L^\infty}$.
 For $I\subset\R$, we denote by $\mathcal{C}(I;X)$ the set of continuous functions on $I$ with values in $X$. Sometimes we will denote $L^p(0,T;X)$ by $L_T^pX$.

\section{Construction of Initial Data}
Define a smooth cut-off function $\chi$ with values in $[0,1]$ which satisfies
\bbal
\chi(\xi)=
\bca
1, \quad \mathrm{if} \ |\xi|\leq \frac{1}{4^Q},\\
0, \quad \mathrm{if} \ |\xi|\geq \frac{1}{2^Q}.
\eca
\end{align*}
For simplicity, we set $\gamma:=\frac{17}{24}$ and dnote
\bbal
n\in 16\mathbb{N}=\left\{16,32,48,\cdots\right\}\quad\text{and}\quad
\mathbb{N}(n)=\left\{k\in 8\mathbb{N}: \frac{n}4 \leq k\leq \frac{n}2\right\}.
\end{align*}
We introduce the following new notation which will be used often throughout this paper
  \bbal
  \|f\|_{B^k_{\infty,1}\left(\mathbb{N}(n)\right)}=\sum_{j\in\mathbb{N}(n)}2^k\|\Delta_jf\|_{L^\infty},\quad k\in\{0,1\}.
  \end{align*}
Inspired by \cite{gyy,lyz3}, we define the initial data $u_0$ which contains high frequency and low frequency by
\bbal
u_{0,n}&=n^{-\frac{1}{Q+1}}\f(u^{\rm{H}}_{0,n}+u^{\rm{L}}_{0,n}\g),
\end{align*}
where
\bbal
u^{\rm{H}}_{0,n}&:=2^{-n}\log n\sum_{\ell\in \mathbb{N}(n)}\cos\left(2^n\gamma(x+2^{\ell+1}\gamma)\right)\cd
\cos\left(2^{\ell}\gamma(x+2^{\ell+1}\gamma)\right)\cd\check{\chi}(x+2^{\ell+1}\gamma),\\
u^{\rm{L}}_{0,n}&:=\sum_{\ell\in \mathbb{N}(n)}\check{\chi}(x+2^{\ell+1}\gamma).
\end{align*}
For any $1\leq M\leq Q$, it is easy to check that
\bal\label{hi-supp}
&\mathrm{supp} \ \mathcal{F}\f(\cos\left(2^n\gamma(x+2^{\ell+1}\gamma)\right)\cd\cos\left(2^{\ell}\gamma(x+2^{\ell+1}\gamma)\right)
\cd\check{\chi}^M(x+2^{\ell+1}\gamma)\g)\nonumber\\
&\qquad\subset \left\{\xi\in\R: \ 2^{n}\gamma-2^{\ell}\gamma-\fr12\leq |\xi|\leq 2^{n}\gamma+2^{\ell}\gamma+\fr12\right\},
\end{align}
which implies
\bal\label{supp}
\mathrm{supp} \ \widehat{u^{\rm{L}}_{0,n}}\subset \left\{\xi\in\R: \ |\xi|\leq \frac{1}{2^Q}\right\}, \qquad \mathrm{supp}\ \widehat{u^{\rm{H}}_{0,n}}\subset \left\{\xi\in\R: \ \fr43 2^{n-1}\leq |\xi|\leq \fr32 2^{n-1}\right\}.
\end{align}

\begin{lemma}\label{le-e1}
There exists a positive constant $C$ independent of $n$ such that
\bbal
2^{n}\|u^{\rm{H}}_{0,n}\|_{L^\infty}+\|\pa_xu^{\rm{H}}_{0,n}\|_{L^\infty}\leq C \log n,\qquad
\|u^{\rm{L}}_{0,n}\|_{C^{0,1}}\leq C.
\end{align*}
\end{lemma}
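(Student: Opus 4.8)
The plan is to estimate each factor appearing in $u^{\rm H}_{0,n}$ and $u^{\rm L}_{0,n}$ separately, using the localization of the summands and Bernstein's inequality (Lemma \ref{lem2.1}). First I would record the elementary fact that $\check{\chi}\in\mathcal{S}(\R)$, so $\|\check{\chi}\|_{L^\infty}$, $\|\pa_x\check{\chi}\|_{L^\infty}$ and more generally every Schwartz seminorm of $\check{\chi}$ is a finite constant independent of $n$. Since each term of $u^{\rm L}_{0,n}$ is just a translate $\check{\chi}(x+2^{\ell+1}\gamma)$, the only subtlety in bounding $\|u^{\rm L}_{0,n}\|_{C^{0,1}}$ is that the sum ranges over $\ell\in\mathbb N(n)$, whose cardinality grows with $n$; here I would exploit the rapid decay of $\check{\chi}$ together with the fact that the translation centers $2^{\ell+1}\gamma$ are lacunary (hence pairwise separated by at least a fixed geometric factor), so that at any fixed $x$ at most finitely many translates are non-negligible and the tails sum to an absolute constant. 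More precisely, $|\check{\chi}(x+2^{\ell+1}\gamma)|\le C_N(1+|x+2^{\ell+1}\gamma|)^{-N}$ for any $N$, and summing this over the lacunary sequence of centers gives a bound uniform in $x$ and $n$; the same argument applies to $\pa_x\check{\chi}$. This yields $\|u^{\rm L}_{0,n}\|_{C^{0,1}}\le C$.

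For the high-frequency part, the key point is that $u^{\rm H}_{0,n}$ carries the prefactor $2^{-n}\log n$. Bounding $\|u^{\rm H}_{0,n}\|_{L^\infty}$ directly: each summand is a product of two cosines (each bounded by $1$) and a translate of $\check{\chi}$, so by the same lacunary-summation argument as above, $\|\sum_{\ell}\cos(\cdots)\cos(\cdots)\check{\chi}(x+2^{\ell+1}\gamma)\|_{L^\infty}\le C$, whence $\|u^{\rm H}_{0,n}\|_{L^\infty}\le C2^{-n}\log n$, i.e. $2^n\|u^{\rm H}_{0,n}\|_{L^\infty}\le C\log n$, as claimed. For $\|\pa_x u^{\rm H}_{0,n}\|_{L^\infty}$ I would not differentiate term by term (the derivative landing on $\cos(2^n\gamma(\cdot))$ would produce a factor $2^n$, which is fine, but differentiating the other factors must be controlled too); rather, since by \eqref{supp} the Fourier transform of $u^{\rm H}_{0,n}$ is supported in the annulus $\{\tfrac43 2^{n-1}\le|\xi|\le\tfrac32 2^{n-1}\}$, Bernstein's inequality gives $\|\pa_x u^{\rm H}_{0,n}\|_{L^\infty}\le C\,2^n\,\|u^{\rm H}_{0,n}\|_{L^\infty}\le C\log n$. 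This is the cleanest route and sidesteps the product rule entirely.

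The main (and only real) obstacle is the uniform-in-$n$ control of sums of translates of Schwartz functions over the index set $\mathbb N(n)$, whose size is comparable to $n$. The resolution is the lacunarity of the centers $\{2^{\ell+1}\gamma:\ell\in\mathbb N(n)\}$: consecutive $\ell$'s differ by at least $8$ (as $\mathbb N(n)\subset 8\mathbb N$), so consecutive centers differ by a factor $\ge 2^8$, and the Schwartz decay of $\check{\chi}$ makes $\sum_\ell (1+|x+2^{\ell+1}\gamma|)^{-2}$ converge to a bound independent of $x$ and $n$. Once this geometric lemma is in hand, everything else is a routine application of Bernstein's inequality and the triangle inequality, and the two displayed estimates follow immediately.
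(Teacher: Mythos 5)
Your proof is correct. The paper actually states Lemma \ref{le-e1} without proof, and your argument supplies exactly the standard one that is implicitly intended: the uniform-in-$n$ bound on the sums of translates follows from the Schwartz decay of $\check{\chi}$ together with the (more than unit) separation of the lacunary centers $2^{\ell+1}\gamma$, and the derivative bound for $u^{\rm H}_{0,n}$ follows from the annulus localization \eqref{supp} via Bernstein's inequality (Lemma \ref{lem2.1}), which is indeed the cleanest route.
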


\begin{lemma}\label{le-e2}
There exists a positive constant $c$ independent of $n$ such that
\bbal
\left\|u^{Q-1}_{0,n}(\pa_xu_{0,n})^2\right\|_{B^0_{\infty,1}\left(\mathbb{N}(n)\right)}\geq c(\log n)^{2}, \qquad n\gg1.
\end{align*}
\end{lemma}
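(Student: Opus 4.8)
The plan is to extract the dominant low-frequency-high-frequency interaction from $u_{0,n}^{Q-1}(\pa_x u_{0,n})^2$ and show it survives the Littlewood-Paley projection onto the dyadic blocks indexed by $\mathbb{N}(n)$. Write $u_{0,n}=n^{-1/(Q+1)}(u^{\rm H}_{0,n}+u^{\rm L}_{0,n})$; then $\pa_x u_{0,n}=n^{-1/(Q+1)}(\pa_x u^{\rm H}_{0,n}+\pa_x u^{\rm L}_{0,n})$, and the derivative falling on $u^{\rm H}_{0,n}$ produces the large factor: $\pa_x u^{\rm H}_{0,n}$ has size $\approx\log n$ in $L^\infty$ by Lemma \ref{le-e1}, specifically the term where $\pa_x$ hits $\cos(2^n\gamma(\cdot))$ gives $-2^n\gamma\log n\cdot 2^{-n}\sum_\ell \sin(2^n\gamma(x+2^{\ell+1}\gamma))\cos(2^\ell\gamma(x+2^{\ell+1}\gamma))\check\chi(x+2^{\ell+1}\gamma)$, while the other two terms (derivative on the slow cosine or on $\check\chi$) are $O(2^{-n}\cdot 2^{n/2}\log n)=o(\log n)$. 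So up to lower-order corrections, $(\pa_x u_{0,n})^2\approx n^{-2/(Q+1)}\gamma^2(\log n)^2\big(\sum_\ell \sin(2^n\gamma(x+2^{\ell+1}\gamma))\cos(2^\ell\gamma(x+2^{\ell+1}\gamma))\check\chi(x+2^{\ell+1}\gamma)\big)^2$, and I would further observe that since the $\ell$-summands are essentially localized near $x\approx -2^{\ell+1}\gamma$ (because $\check\chi$ is Schwartz and the centers $2^{\ell+1}\gamma$ are widely separated for distinct $\ell\in\mathbb{N}(n)$), the cross terms in the square are negligible and the square is $\approx \sum_\ell \sin^2(2^n\gamma(x+2^{\ell+1}\gamma))\cos^2(2^\ell\gamma(x+2^{\ell+1}\gamma))\check\chi^2(x+2^{\ell+1}\gamma)$.

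Next I would use the product-to-sum identity $\sin^2\theta=\tfrac12-\tfrac12\cos(2\theta)$ and $\cos^2\phi=\tfrac12+\tfrac12\cos(2\phi)$, so each summand contributes a piece $\tfrac14\check\chi^2(x+2^{\ell+1}\gamma)$ — which is low frequency — plus pieces carrying frequencies $2^{n+1}\gamma$, $2^{\ell+1}\gamma$, $2^{n+1}\gamma\pm 2^{\ell+1}\gamma$. For $\mathbb{N}(n)$ we have $\ell\in 8\mathbb{N}$ with $n/4\le\ell\le n/2$, so $2^{\ell+1}\gamma=2^{\ell+1}\cdot\tfrac{17}{24}$ lands in the annulus $\tfrac43 2^{\ell}\le |\xi|\le \tfrac32 2^{\ell}$ after accounting for the $O(1)$ spreading from $\check\chi^2$; since $\varphi(2^{-j}\cdot)\equiv1$ on $[4/3,3/2]$, the block $\Delta_\ell$ captures exactly this oscillating term with its full $L^\infty$ mass. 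The factor $u_{0,n}^{Q-1}$ is then handled by noting $u_{0,n}^{Q-1}=n^{-(Q-1)/(Q+1)}(u^{\rm L}_{0,n}+u^{\rm H}_{0,n})^{Q-1}$, and the leading contribution comes from the pure low-frequency power $(u^{\rm L}_{0,n})^{Q-1}$, whose value near $x\approx -2^{\ell+1}\gamma$ is $\approx \check\chi(0)^{Q-1}=(\text{const})^{Q-1}$ since the $\ell$-terms of $u^{\rm L}_{0,n}$ don't overlap; multiplying by this does not change the frequency localization of the $2^{\ell+1}\gamma$-oscillation (low frequency times that annulus-supported piece stays in a comparable annulus, again inside the support where $\varphi(2^{-\ell}\cdot)\equiv1$).

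Assembling: $\Delta_\ell\big(u_{0,n}^{Q-1}(\pa_x u_{0,n})^2\big)$ has $L^\infty$-norm $\gtrsim n^{-2/(Q+1)}n^{-(Q-1)/(Q+1)}(\log n)^2=n^{-1}(\log n)^2\cdot\big(n^{-2/(Q+1)+1-(Q-1)/(Q+1)}\big)$; one checks the exponent bookkeeping works out so that after summing over the $\approx n/16$ many $\ell\in\mathbb{N}(n)$ — wait, I need to be careful here, so let me instead keep the normalization explicit and just track that each $\Delta_\ell$ piece has size comparable to $n^{-1}(\log n)^2$ times a dimensionless constant, and summing $2^0\|\Delta_\ell(\cdot)\|_{L^\infty}$ over $\ell\in\mathbb{N}(n)$ (there are $\Theta(n)$ of them) yields $\gtrsim (\log n)^2$ after the $n^{-1}$ and the count of indices balance. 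The main obstacle I anticipate is making the ``cross terms and lower-order derivative terms are negligible'' steps rigorous: one must quantify, using the rapid decay of $\check\chi$ and the exponential separation $|2^{\ell+1}\gamma-2^{\ell'+1}\gamma|\gtrsim 2^{n/4}$ of the bump centers, that the off-diagonal products decay faster than any power of $2^n$ and hence are absorbed; and one must verify that passing the non-dominant derivative terms through the square contributes only $O(2^{-n/2}(\log n)^2)$, which is swamped. A secondary technical point is confirming that $2^{\ell+1}\gamma$ with $\gamma=17/24$ and $\ell\in 8\mathbb{N}$ genuinely sits in the interval $[4/3,3/2]\cdot 2^\ell$ so that $\Delta_\ell$ acts as the identity on that frequency — this is exactly why $\gamma$ was chosen to be $17/24$, and it should be a direct numerical check.
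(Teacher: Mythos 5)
Your proposal follows essentially the same route as the paper's proof: isolate the dominant term $\tfrac1n(u^{\rm L}_{0,n})^{Q-1}(\pa_x u^{\rm H}_{0,n})^2$ with the derivative falling on the fast cosine, apply the product-to-sum identity so that the surviving piece oscillates at frequency $2^{\ell+1}\gamma\in[\tfrac43,\tfrac32]\cdot 2^{\ell}$ where $\Delta_\ell$ acts as the identity, discard the cross terms via the rapid decay of $\check\chi$ and the separation of the bump centers $2^{\ell+1}\gamma$, and sum the $\Theta(n)$ diagonal contributions of size $n^{-1}(\log n)^2$ each. The bookkeeping ($n^{-2/(Q+1)}\cdot n^{-(Q-1)/(Q+1)}=n^{-1}$) and the numerical check on $\gamma=\tfrac{17}{24}$ are both correct, and the technical points you flag are exactly the ones the paper addresses.
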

\begin{proof}
Notice that
\bbal
&\quad \ u^{Q-1}_{0,n}(\pa_xu_{0,n})^2
\\&=\underbrace{\frac1n(u^{\rm{L}}_{0,n})^{Q-1}(\pa_xu^{\rm{H}}_{0,n})^2}_{=:\;\mathbf{I}_1}
+\underbrace{\frac1n[u^{Q-1}_{0,n}-(u^{\rm{L}}_{0,n})^{Q-1}](\pa_xu^{\rm{H}}_{0,n}+\pa_xu^{\rm{L}}_{0,n})^2}_{=:\;\mathbf{I}_2}
\\& \quad +\underbrace{\frac1n(u^{\rm{L}}_{0,n})^{Q-1}\f((\pa_xu^{\rm{L}}_{0,n})^2+2\pa_xu^{\rm{H}}_{0,n}\pa_xu^{\rm{L}}_{0,n}\g)}_{=:\;\mathbf{I}_3}.
\end{align*}
For the term $\mathbf{I}_2$, we can deduce from Lemma \ref{le-e1} that
\bal\label{i2}
&\left\|\mathbf{I}_2\right\|_{B^0_{\infty,1}\left(\mathbb{N}(n)\right)}\leq Cn \|\mathbf{I}_2\|_{L^\infty}\leq C\|u^{\rm{H}}_{0,n}\|_{L^\infty}||u_{0,n}||^{Q-2}_{L^\infty}\|\pa_xu^{\rm{H}}_{0,n},\pa_xu^{\rm{L}}_{0,n}\|^2_{L^{\infty}}\leq C2^{-n}(\log n)^{Q+1}.
\end{align}
For the term $\mathbf{I}_3$, using \eqref{supp}, one has
\bal\label{i3}
&\De_j\mathbf{I}_3=0\quad\text{for}\; j\in \mathbb{N}(n)\quad\Rightarrow\quad
\left\|\mathbf{I}_3\right\|_{B^0_{\infty,1}\left(\mathbb{N}(n)\right)}=0.
\end{align}
For the term $\mathbf{I}_1$, noticing that
\bbal
\pa_xu^{\rm{H}}_{0,n}&=-\gamma\log n\sum_{\ell\in \mathbb{N}(n)}\sin\left(2^n\gamma(x+2^{\ell+1}\gamma)\right)\cd
\cos\left(2^{\ell}\gamma(x+2^{\ell+1}\gamma)\right)\cd\check{\chi}(x+2^{\ell+1}\gamma)\\
&\quad+2^{-n}\log n\sum_{\ell\in \mathbb{N}(n)}\cos\left(2^n\gamma(x+2^{\ell+1}\gamma)\right)\cd\pa_x\Big(
\cos\left(2^{\ell}\gamma(x+2^{\ell+1}\gamma)\right)\cd\check{\chi}(x+2^{\ell+1}\gamma)\Big),
\end{align*}
then we decompose $\mathbf{I}_1$ as follows
\bbal
\mathbf{I}_{1}=\frac{1}{n}(\log n)^{2}\f(\mathbf{I}_{11}+\mathbf{I}_{12}-\mathbf{I}_{13}\g),
\end{align*}
where
\bbal
\mathbf{I}_{11}&=\gamma^2(u^{\rm{L}}_{0,n})^{Q-1}\f(\sum_{\ell\in \mathbb{N}(n)}\sin\left(2^n\gamma(x+2^{\ell+1}\gamma)\right)\cd
\cos\left(2^{\ell}\gamma(x+2^{\ell+1}\gamma)\right)\cd\check{\chi}(x+2^{\ell+1}\gamma)\g)^2,
\\ \mathbf{I}_{12}&=2^{-2n}(u^{\rm{L}}_{0,n})^{Q-1}\f(\sum_{\ell\in \mathbb{N}(n)}\cos\left(2^n\gamma(x+2^{\ell+1}\gamma)\right)\cd\pa_x\Big(
\cos\left(2^{\ell}\gamma(x+2^{\ell+1}\gamma)\right)\cd\check{\chi}(x+2^{\ell+1}\gamma)\Big)\g)^2,
\\ \mathbf{I}_{13}&=2\gamma2^{-n}(u^{\rm{L}}_{0,n})^{Q-1}\sum_{\ell\in \mathbb{N}(n)}\sin\left(2^n\gamma(x+2^{\ell+1}\gamma)\right)\cd
\cos\left(2^{\ell}\gamma(x+2^{\ell+1}\gamma)\right)\cd\check{\chi}(x+2^{\ell+1}\gamma)\\
&\quad\times\sum_{\ell\in \mathbb{N}(n)}\cos\left(2^n\gamma(x+2^{\ell+1}\gamma)\right)\cd\pa_x\Big(
\cos\left(2^{\ell}\gamma(x+2^{\ell+1}\gamma)\right)\cd\check{\chi}(x+2^{\ell+1}\gamma)\Big).
\end{align*}
Easy computations give that
\bbal
\|\mathbf{I}_{12}\|_{L^\infty}&\leq C2^{-2n}\|u^{\rm{L}}_{0,n}\|^{Q-1}_{L^\infty}\left\|\sum_{\ell\in \mathbb{N}(n)}\pa_x\Big(
\cos\left(2^{\ell}\gamma(x+2^{\ell+1}\gamma)\right)\cd\check{\chi}(x+2^{\ell+1}\gamma)\Big)\right\|^2_{L^\infty}
\\&\leq C2^{-2n}\left\|\sum_{\ell\in \mathbb{N}(n)}\frac{2^{\ell}}{(1+|x+2^{\ell+1}\gamma|)^M}\right\|^2_{L^\infty}\leq C2^{-n},
\end{align*}
which implies
\bal\label{i12}
\left\|\mathbf{I}_{12}\right\|_{B^0_{\infty,1}\left(\mathbb{N}(n)\right)}\leq C n 2^{-n}.
\end{align}
Similarly, we have
\bal\label{i13}
\left\|\mathbf{I}_{13}\right\|_{B^0_{\infty,1}\left(\mathbb{N}(n)\right)}\leq C n 2^{-\frac{n}2}.
\end{align}
Using $\sin^2a\cos^2b=\fr14(1-\cos(2a))(1+\cos(2b))$, we can decompose $\mathbf{I}_{11}$ as
\bbal
\mathbf{I}_{11}=\gamma^2\sum_{i=1}^5\mathbf{I}_{11i},\quad\text{where}
\end{align*}
\bbal
\mathbf{I}_{111}&=\frac14(u^{\rm{L}}_{0,n})^{Q-1}\sum_{\ell\in \mathbb{N}(n)}\cos\big(2^{\ell+1}\gamma (x+2^{\ell+1}\gamma)\big)\check{\chi}^2(x+2^{\ell+1}\gamma) ,\\
\mathbf{I}_{112}&= \frac14(u^{\rm{L}}_{0,n})^{Q-1}\sum_{\ell\in \mathbb{N}(n)}\check{\chi}^2(x+2^{\ell+1}\gamma),
\\
\mathbf{I}_{113}&=-\frac14(u^{\rm{L}}_{0,n})^{Q-1}\sum_{\ell\in \mathbb{N}(n)}\cos\big(2^{n+1}\gamma (x+2^{\ell+1}\gamma)\big)\cd\check{\chi}^2(x+2^{\ell+1}\gamma),
\\
\mathbf{I}_{114}&= -\frac14(u^{\rm{L}}_{0,n})^{Q-1}\sum_{\ell\in \mathbb{N}(n)}\cos\big(2^{n+1}\gamma (x+2^{\ell+1}\gamma)\big)\cd\cos\big(2^{\ell+1}\gamma (x+2^{\ell+1}\gamma)\big)\cd\check{\chi}^2(x+2^{\ell+1}\gamma),
\\
\mathbf{I}_{115}&=(u^{\rm{L}}_{0,n})^{Q-1}\sum_{\ell,j\in \mathbb{N}(n)\atop\ell\neq j}\Big(\sin\left(2^n\gamma(x+2^{\ell+1}\gamma)\right)\cd
\cos\left(2^{\ell}\gamma(x+2^{\ell+1}\gamma)\right)\cd\check{\chi}(x+2^{\ell+1}\gamma)
  \\& \quad  \times \sin\left(2^n\gamma(x+2^{j+1}\gamma)\right)\cd
\cos\left(2^{j}\gamma(x+2^{j+1}\gamma)\right)\cd\check{\chi}(x+2^{j+1}\gamma)\Big).
\end{align*}
By \eqref{hi-supp}, we have
\bal\label{i112}
\De_j\mathbf{I}_{112}=\De_j\mathbf{I}_{113}=\De_j\mathbf{I}_{114}=0\quad\text{for}\; j\in \mathbb{N}(n) \quad \Rightarrow \quad
\|\mathbf{I}_{112},\mathbf{I}_{113},\mathbf{I}_{114}\|_{B^0_{\infty,1}\left(\mathbb{N}(n)\right)}=0.
\end{align}
Using Lemma \ref{le-e1}, we have
\bal\label{i115}
\|\mathbf{I}_{115}\|_{B^0_{\infty,1}(\mathbb{N}(n))}&\leq Cn\|\mathbf{I}_{115}\|_{L^\infty}
\leq Cn\|u^{\rm{L}}_{0,n}\|^{Q-1}_{L^\infty}\sum_{\ell,j\in \mathbb{N}(n)\atop
\ell\neq j}\left\|\check{\chi}(x+2^{\ell+1}\gamma)
  \cd\check{\chi}(x+2^{j+1}\gamma)\right\|_{L^\infty} \nonumber
\\&\leq Cn\sum_{j>\ell\in \mathbb{N}(n)}\left\|(1+|x+2^{j+1}\gamma|^2)^{-M}
(1+|x+2^{\ell+1}\gamma|^2)^{-M}\right\|_{L^\infty} \nonumber \\
&\leq Cn\sum_{j>\ell\in \mathbb{N}(n)}\left\|(1+|x|^2)^{-M}(1+|x-(2^{j+1}-2^{\ell+1})\gamma|^2))^{-M}\right\|_{L^\infty} \nonumber
\\&\leq Cn\sum_{j>\ell\in \mathbb{N}(n)}\left(\gamma (2^{j}-2^{\ell})\right)^{-2M}
\leq Cn^32^{-\frac{nM}{2}}.
\end{align}
Finally, we can break $\mathbf{I}_{111}$ down into three parts
\bbal
\mathbf{I}_{111}&=\frac{1}{4}\sum_{\ell\in \mathbb{N}(n)}\cos\big(2^{\ell+1}\gamma (x+2^{\ell+1}\gamma)\big)\check{\chi}^{Q+1}(x+2^{\ell+1}\gamma)\\
&\quad+\frac{1}{4}\sum_{\ell,j\in \mathbb{N}(n)\atop
\ell\neq j}\cos\big(2^{\ell+1}\gamma (x+2^{\ell+1}\gamma)\big)\check{\chi}^2(x+2^{\ell+1}\gamma)
\check{\chi}^{Q-1}(x+2^{j+1}\gamma)
\\
&\quad+\frac{1}{4}\sum_{\ell\in \mathbb{N}(n)}\cos\big(2^{\ell+1}\gamma (x+2^{\ell+1}\gamma)\big)\check{\chi}^2(x+2^{\ell+1}\gamma)\cdot\sum_{\ell_1,\cdots,\ell_{Q-1}\in \mathbb{N}(n)\atop
\exists\ell_i\neq \ell_j, \ 1\leq i \neq j \leq Q-1}\check{\chi}(x+2^{\ell_1+1}\gamma)\cdots \check{\chi}(x+2^{\ell_{Q-1}+1}\gamma)\\
&:=\frac{1}{4}\mathbf{I}_{1111}+\frac{1}{4}\mathbf{I}_{1112}+\frac{1}{4}\mathbf{I}_{1113}.
\end{align*}
Due to \eqref{hi-supp}, we have
\begin{equation*}
{\dot{\Delta}_j\mathbf{I}_{1111}=\mathcal{F}^{-1}\left(\varphi(2^{-j}\cdot)\mathcal{F}\mathbf{I}_{1111}\right)=}
\begin{cases}
\cos\big(2^{j+1}\gamma (x+2^{j+1}\gamma)\big)\check{\chi}^{Q+1}(x+2^{j+1}\gamma), &\text{if}\; \ell=j,\\
0, &\text{if}\; \ell\neq j,
\end{cases}
\end{equation*}
which implies
\bal\label{i1111}
\|\mathbf{I}_{1111}\|_{B^0_{\infty,1}(\mathbb{N}(n))}&=\sum_{j\in \mathbb{N}(n)}\left\|\cos\big(2^{j+1}\gamma (x+2^{j+1}\gamma)\big)\check{\chi}^{Q+1}(x+2^{j+1}\gamma)\right\|_{L^\infty}\geq \sum_{j\in \mathbb{N}(n)}\check{\chi}^{Q+1}(0)\geq cn.
\end{align}
Following the same procedure as $\mathbf{I}_{115}$, we get
\bal\label{i1112}
\|\mathbf{I}_{1112},\mathbf{I}_{1113}\|_{B^0_{\infty,1}(\mathbb{N}(n))}\leq  Cn^{Q}2^{-\frac{nM}{2}},
\end{align}
Combining the above estimates \eqref{i2}-\eqref{i1112}, we obtain that for large enough $n$
\bbal
&\quad \|u^{Q-1}_{0,n}(\pa_xu_{0,n})^2\|_{B^0_{\infty,1}(\mathbb{N}(n))}
\\&\geq \|\mathbf{I}_{1}\|_{B^0_{\infty,1}(\mathbb{N}(n))}-\|\mathbf{I}_{2}\|_{B^0_{\infty,1}(\mathbb{N}(n))}\\
&\geq c\frac{1}{n}(\log n)^{2}\f(\|\mathbf{I}_{1111}\|_{B^0_{\infty,1}(\mathbb{N}(n))}-
\|\mathbf{I}_{12},\mathbf{I}_{13},\mathbf{I}_{115},\mathbf{I}_{1112},\mathbf{I}_{1113}\|_{B^0_{\infty,1}(\mathbb{N}(n))}\g)-C2^{-n}(\log n)^{Q+1}\\
&\geq c(\log n)^{2}.
\end{align*}
This completes the proof of Lemma \ref{le-e2}.
\end{proof}

\section{Proof of the main theorem}

By classical result, we can obtian that the generalized Camassa-Holm equation has a solution $u_n\in \mathcal{C}([0,1];H^{3})$ with the initial data $u_{0,n}$. Let $\phi_n$ satisfy the following ODE:
\begin{align}\label{ode}
\quad\begin{cases}
\frac{\dd}{\dd t}\phi_n(t,x)=u^{Q}_n(t,\phi_n(t,x)),\\
\phi_n(0,x)=x,
\end{cases}
\end{align}
which is equivalent to
\bal\label{n}
\phi_n(t,x)=x+\int^t_0u^{Q}_n(\tau,\phi_n(\tau,x))\dd \tau.
\end{align}
Considering the transport equation
\begin{align}\label{pde}
\quad\begin{cases}
\pa_tv+u^{Q}_n\pa_xv=P,\\
v(0,x)=v_0(x),
\end{cases}
\end{align}
we get from \eqref{pde} that
\bbal
\pa_t(\De_jv)+u^{Q}_n\pa_x\De_jv&=R_j+\Delta_jP,
\end{align*}
with $R_j=[u^Q_n,\De_j]\pa_xv=u^Q_n\De_j\pa_xv-\Delta_j(u^Q_n\pa_xv)$. Due to \eqref{ode}, then
\bbal
\frac{\dd}{\dd t}\left((\De_jv)\circ\phi_n\right)&=R_j\circ\phi_n+\Delta_jP\circ\phi_n,
\end{align*}
which means that
\bal\label{l6}
\De_jv\circ\phi_n=\De_jv_0+\int^t_0R_j\circ\phi_n\dd \tau+\int^t_0\Delta_jP\circ\phi_n\dd \tau.
\end{align}
For $n\gg1$, we have for $t\in[0,1]$
\bbal
\|u_n\|_{C^{0,1}}\leq C\|u_{0,n}\|_{C^{0,1}}\leq C n^{-\frac{1}{Q+1}}\log  n.
\end{align*}
To prove Theorem \ref{th1}, it suffices to show that there exists $t_0\in(0,\frac{1}{\log n}]$ such that
\bal\label{holds}
\|u(t_0,\cdot)\|_{B^1_{\infty,1}}\geq \log\log n.
\end{align}
If \eqref{holds} were not true, then
\bal\label{nholds}
\sup\limits_{t\in(0,\frac{1}{\log n}]}\|u(t,\cdot)\|_{B^1_{\infty,1}}< \log\log n.
\end{align}
Utilizing \eqref{l6} to \eqref{N} yields
\bbal
(\De_ju_n)\circ \phi_n&=\De_ju_{0,n}+\int^t_0R^1_{j,n}\circ \phi_n\dd \tau +\int^t_0\De_jF_n\circ \phi_n\dd \tau
\\& \qquad +c_3\int^t_0\big(\De_jE_n\circ \phi_n-\De_jE_{0,n}\big)\dd \tau+c_3t\De_jE_{0,n},
\end{align*}
where
\bbal
&R^1_{j,n}=[u^{Q}_n,\De_j]\pa_xu_n,\qquad
F_n=-\Lambda^{-2}\f(c_1u^{Q-2}_n(\pa_xu_n)^3+c_2\pa_x(u^{Q+1}_n)\g), \\
&E_n=-\pa_x\Lambda^{-2}\f(u^{Q-1}_n(\pa_xu_n)^2\g), \qquad E_{0,n}=-\pa_x\Lambda^{-2}\f(u^{Q-1}_{0,n}(\pa_xu_{0,n})^2\g).
\end{align*}
Due to Lemma \ref{le-e2}, we deduce
\bal\label{g1}
\sum_{j\in \mathbb{N}(n)}2^j\|\De_jE_{0,n}\|_{L^\infty}
\approx \sum_{j\in \mathbb{N}(n)}\|\De_j\pa_xE_{0,n}\|_{L^\infty}\geq c\sum_{j\in \mathbb{N}(n)}\f\|\De_j[u^{Q-1}_{0,n}(\pa_xu_{0,n})^2]\g\|_{L^\infty}\geq c(\log n)^2.
\end{align}
Then, using the fact $\|f(t,\phi_n(t,x))\|_{L^\infty}= \|f(t,\cdot)\|_{L^\infty}$ and Lemma \ref{lem2.2}, we have
\bal\label{g2}
\sum_{j\geq -1}2^j\|R^1_{j,n}\circ \phi_n\|_{L^\infty}&=\sum_{j\geq -1}2^j\|R^1_{j,n}\|_{L^\infty} \nonumber
\\&\leq C\|\pa_x(u^{Q}_n)\|_{B^0_{\infty,1}}\|u_n\|_{B^1_{\infty,1}}
\\&\leq C\|u_n\|^{Q-1}_{C^{0,1}}\|u_n\|^2_{B^1_{\infty,1}}\leq C  n^{-\frac{Q-1}{Q+1}}(\log  n)^{Q+1}. \nonumber
\end{align}
Also, we have
\bal\label{g3}
\sum_{j\in \mathbb{N}(n)}2^j\|\De_jF_n\circ \phi_n\|_{L^\infty}&\leq C\sum_{j\in \mathbb{N}(n)}2^j\|\De_jF_n\|_{L^\infty}
\nonumber\\
&\leq C\|u^{Q-2}_n(\pa_xu_n)^3+\pa_x(u^{Q+1}_n)\|_{L^\infty}\nonumber\\
&\leq C\|u_n\|^{Q+1}_{C^{0,1}}
\leq C\|u_{0,n}\|^{Q+1}_{C^{0,1}}\leq Cn^{-1}(\log n)^{Q+1}.
\end{align}
Combining \eqref{g1}-\eqref{g3} and using Lemmas \ref{le-e1}-\ref{le-e2} yields
\bal\label{con1}
\sum_{j\in \mathbb{N}(n)}2^j\|(\De_ju_n)\circ \phi_n\|_{L^\infty}
&\geq t\sum_{j\in \mathbb{N}(n)}2^j\|\De_jE_{0,n}\|_{L^\infty}-\sum_{j\in \mathbb{N}(n)}2^j\|\De_jE_n\circ \phi_n-\De_jE_{0,n}\|_{L^\infty} \nonumber\\
&\quad-C  n^{-\frac{Q-1}{Q+1}}(\log  n)^{Q+1}
-C\|u_{0,n}\|_{B^1_{\infty,1}} \nonumber
\\&\geq ct\log^2n-\sum_{j\in \mathbb{N}(n)}2^j\|\De_jE_n\circ \phi_n-\De_jE_{0,n}\|_{L^\infty}-C n^{-\frac{1}{Q+1}}(\log  n)^{Q+1}.
\end{align}
Next, we need to estimate the term $\De_jE_n\circ \phi_n-\De_jE_{0,n}$. Using
\bbal
&\quad \pa_x\Lambda^{-2}[(Q-1)u^{2Q-2}_n(\pa_xu_n)^3+2u^{Q-1}_n\pa_x(u^Q_n\pa_xu_n)\pa_xu_n]
\\&=\pa_x\Lambda^{-2}[Qu^{2Q-2}_n(\pa_xu_n)^3+\pa_x(u^{2Q-1}_n(\pa_xu_n)^2)]
\\&=Q\pa_x\Lambda^{-2}(u^{2Q-2}_n(\pa_xu_n)^3)+\Lambda^{-2}(u^{2Q-1}_n(\pa_xu_n)^2)-u^{2Q-1}_n(\pa_xu_n)^2,
\end{align*}
then we find that
\bal\label{E}
&\quad \pa_tE_n+u^{Q}_n\pa_xE_n \nonumber
\\&=-\pa_x\Lambda^{-2}\pa_t(u^{Q-1}_n(\pa_xu_n)^2)
-u^Q_n\pa_x^2\Lambda^{-2}(u^{Q-1}_n(\pa_xu_n)^2)\nonumber\\
&=-\pa_x\Lambda^{-2}((Q-1)u^{Q-2}_n\pa_tu_n (\pa_xu_n)^2+2u^{Q-1}_n\pa_xu_n\pa_t\pa_xu_n)+u^{2Q-1}_n(\pa_xu_n)^2-u^Q_n\Lambda^{-2}(u^{Q-1}_n(\pa_xu_n)^2)\nonumber\\
&=\mathrm{J}_n+\pa_x\Lambda^{-2}[(Q-1)u^{2Q-2}_n(\pa_xu_n)^3+2u^{Q-1}_n\pa_x(u^{Q}_n\pa_xu_n)\pa_xu_n]+u^{2Q-1}_n(\pa_xu_n)^2-u^Q_n\Lambda^{-2}(u^{Q-1}_n(\pa_xu_n)^2)\nonumber\\
&=\mathrm{J}_n+\mathrm{K}_n,
\end{align}
where
\bbal
&\mathrm{J}_n=-\pa_x\Lambda^{-2}\f((Q-1)u^{Q-2}_n[\mathbf{P}_1(u_n)+\mathbf{P}_2(u_n)](\pa_xu_n)^2
+2\pa_x[\mathbf{P}_1(u_n)+\mathbf{P}_2(u_n)]u^{Q-1}_n\pa_xu_n
\g),\\
&\mathrm{K}_n=Q\pa_x\Lambda^{-2}(u^{2Q-2}_n(\pa_xu_n)^3)
+\Lambda^{-2}(u^{2Q-1}_n(\pa_xu_n)^2)
-u^Q_n\Lambda^{-2}(u^{Q-1}_n(\pa_xu_n)^2).
\end{align*}
Utilizing \eqref{l6} to \eqref{E} yields
\bbal
\De_jE_n\circ \phi_n-\De_jE_{0,n}=\int^t_0[u^Q_n,\De_j]\pa_xE_n\circ \phi_n\dd \tau +\int^t_0\De_j(\mathrm{J}_n+\mathrm{K}_n)\circ \phi_n\dd \tau.
\end{align*}
Using the commutator estimate from Lemma \ref{lem2.2}, one has
\bbal
2^j\|[u^Q_n,\De_j]\pa_xE_n\|_{L^\infty}&\leq C(\|\pa_x(u^Q_n)\|_{L^\infty}\|E_n\|_{B^1_{\infty,\infty}}
+\|\pa_xE_n\|_{L^\infty}\|u_n^Q\|_{B^1_{\infty,\infty}})\\
&\leq C\|u_n\|^{2Q+1}_{C^{0,1}}\leq Cn^{-\frac{2Q+1}{Q+1}}(\log n)^{2Q+1}.
\end{align*}
Due to the facts
\bbal
\|\Lambda^{-2}f\|_{L^\infty}+ \|\pa_x\Lambda^{-2}f\|_{L^\infty}\leq 2\|f\|_{L^\infty}\quad \Rightarrow\quad \|\pa^2_x\Lambda^{-2}f\|_{L^\infty}\leq 2\|f\|_{L^\infty},
\end{align*}
then we have
\bbal
2^j\|\De_j\mathrm{J}_n\|_{L^\infty}\approx\|\pa_x\mathrm{J}_n\|_{L^\infty}\leq C \|u_n\|^{2Q+1}_{C^{0,1}}\leq Cn^{-\frac{{2Q+1}}{Q+1}}(\log n)^{2Q+1}.
\end{align*}
Similarly,
\bbal
2^j\|\De_j\mathrm{K}_n\|_{L^\infty}\leq C \|u_n\|^{2Q+1}_{C^{0,1}}\leq Cn^{-\frac{{2Q+1}}{Q+1}}(\log n)^{2Q+1}.
\end{align*}
Then, we deduce that
\bbal
2^j\|\De_jE_n\circ \phi_n-\De_jE_{0,n}\|_{L^\infty}\leq C \|u_n\|^{2Q+1}_{C^{0,1}}\leq Cn^{-\frac{{2Q+1}}{Q+1}}(\log n)^{2Q+1},
\end{align*}
which leads to
\bal\label{E-E0}
\sum_{j\in \mathbb{N}(n)}2^j\|\De_jE_n\circ \phi_n-\De_jE_{0,n}\|_{L^\infty}
\leq Cn^{-\frac{{Q}}{Q+1}}(\log n)^{2Q+1}.
\end{align}
Combining \eqref{con1} and \eqref{E-E0}, then for $t=\frac{1}{\log n}$, we obtain for $n\gg1$
\bbal
\|u_n(t)\|_{B^1_{\infty,1}}&\geq \|u_n(t)\|_{B^1_{\infty,1}(\mathbb{N}(n))}\\
&\geq c\sum_{j\in \mathbb{N}(n)}2^j\|(\De_ju)\circ \phi\|_{L^\infty}
\\&\geq c t(\log n)^2-Cn^{-\frac{{Q}}{Q+1}}(\log n)^{2Q+1}-C  n^{-\frac{1}{Q+1}}(\log  n)^{Q+1}\\
&\geq \log\log n,
\end{align*}
which contradicts the hypothesis \eqref{nholds}. Thus, Theorem \ref{th1} is proved.{\hfill $\square$}

\section*{Acknowledgments}
 M. Li is supported by Natural Science Foundation of Jiangxi Province (20212BAB211011). W. Zhu is supported by the National Natural Science Foundation of China (12201118) and Guangdong Basic and Applied Basic Research Foundation (2021A1515111018).

\section*{Data Availability} No data was used for the research described in the article.

\section*{Conflict of interest}
The authors declare that they have no conflict of interest.

\addcontentsline{toc}{section}{References}

\end{document}